\documentclass[10pt,english,a4paper]{amsart}

\usepackage[T1]{fontenc}
\usepackage{lmodern}
\usepackage{babel}
\usepackage{amsmath,amssymb,amsthm}
\usepackage[marginratio={1:1,1:1},totalwidth=400pt,totalheight=600pt]{geometry}
\usepackage{microtype}
\usepackage{hyperref}

\newtheorem{theorem}{Theorem}[section]
\newtheorem{lemma}[theorem]{Lemma}
\newtheorem{corollary}[theorem]{Corollary}

\theoremstyle{definition}
\newtheorem{definition}[theorem]{Definition}
\newtheorem{remark}[theorem]{Remark}

\newcommand{\R}{\mathbb{R}}
\newcommand{\C}{\mathbb{C}}

\DeclareMathOperator{\spec}{sp}
\DeclareMathOperator{\supp}{supp}
\DeclareMathOperator{\re}{Re}
\DeclareMathOperator{\im}{Im}

\hypersetup{hidelinks,
pdftitle={Discrete hermitian groups are FC-hypercentral},
pdfauthor={Tron Omland}}

\title{Discrete hermitian groups are FC-hypercentral}
\author[Omland]{Tron Omland}
\address{Norwegian National Security Authority (NSM) \and Department of Mathematics, University of Oslo, Norway}
\email{tron.omland@gmail.com}
\date{\today}

\begin{document}

\begin{abstract}
We show that a discrete group is hermitian if and only if it is FC-hypercentral, that is, if and only if it does not have any nontrivial ICC quotients.
In particular, a finitely generated group is hermitian if and only if it is virtually nilpotent.
The main step is an elementary, self-contained proof that every hermitian ICC group is trivial, based on a finite-set separation property of nontrivial ICC groups.
\end{abstract}

\maketitle

\section{Introduction}

In this note all groups are discrete.
Let $G$ be a group.
We write the convolution product of $f,h\in\ell^1(G)$ as $fh$.
The involution is $f^*(g)=\overline{f(g^{-1})}$ and the norm is $\lVert f\rVert=\sum_{g\in G}|f(g)|$.
The point mass at $g\in G$ is denoted by $\delta_g$.
Thus $\delta_g\delta_h=\delta_{gh}$ and $\delta_g^*=\delta_{g^{-1}}$.
The unit is $1=\delta_e$ and we write $\lambda$ for $\lambda1$ when $\lambda\in\C$.
If $f$ and $h$ have disjoint supports then $\lVert f+h\rVert=\lVert f\rVert+\lVert h\rVert$.
All spectra are taken in $\ell^1(G)$.

Recall that $\ell^1(G)$ is \emph{hermitian} if $\spec(a)\subseteq\R$ for every $a=a^*\in\ell^1(G)$.
It is \emph{symmetric} if $\spec(a^*a)\subseteq[0,\infty)$ for every $a\in\ell^1(G)$.
The two properties are equivalent by the Shirali-Ford theorem, see~\cite[Theorem~11.4.1]{Palmer}, but we will not need this.
A group $G$ is hermitian if $\ell^1(G)$ is hermitian.
A group is \emph{ICC} if every conjugacy class other than $\{e\}$ is infinite.
The purpose of this note is to prove the following theorem.

\begin{theorem}\label{thm:main}
Let $G$ be a discrete ICC group.
If $\ell^1(G)$ is symmetric or hermitian, then $G$ is trivial.
\end{theorem}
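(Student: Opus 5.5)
The plan is to prove the contrapositive: if $G$ is a nontrivial ICC group, we exhibit a self-adjoint $a\in\ell^1(G)$ whose $\ell^1$-spectrum is not contained in $\R$. The group-theoretic input is a \emph{finite-set separation property}. If $G$ is ICC and $F\subseteq G\setminus\{e\}$ is finite, then there is $g\in G$ with $gFg^{-1}\cap F=\emptyset$. More generally, $g$ can be chosen so that finitely many prescribed products involving $g$ avoid a given finite set.

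The separation property should follow from B.H.~Neumann's lemma. For $x,y\in F$, the set $\{g: gxg^{-1}=y\}$ is either empty or a coset of the centralizer $C_G(x)$. Each such centralizer has infinite index because the conjugacy class of $x$ is infinite. A group is never a finite union of cosets of infinite-index subgroups, so some $g$ lies outside all of these cosets. I would prove this directly, by the standard induction on the number of distinct subgroups, to keep the note self-contained.

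Using separation repeatedly, I would choose elements $g_1,g_2,\dots$ inductively so that the monomials in the $g_j^{\pm1}$ of bounded length behave as in a free group: distinct reduced words up to length $n$ in $g_1,\dots,g_n$ represent distinct group elements. At stage $n$, the finitely many coincidences to be avoided form a finite list of conditions of the form "a conjugate (or twisted conjugate) of a nontrivial element lands in a finite set". Separation then supplies $g_n$ avoiding all of them.

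Next I would take
\[
a=\sum_{j}c_j\bigl(\delta_{g_j}+\delta_{g_j}^*\bigr)+i\sum_j d_j\bigl(\delta_{h_j}-\delta_{h_j}^*\bigr),
\]
with rapidly decaying coefficients, so that $a=a^*$. The $h_j$ are chosen in the same inductive scheme, and the coefficients are chosen so that the length-$\le n$ part of $a^n$ is determined by the stage-$n$ independence. Because the relevant monomials have disjoint supports, $\lVert f+h\rVert=\lVert f\rVert+\lVert h\rVert$ lets us compute, or at least bound below, $\lVert a^n\rVert$ and norms of polynomials in $a$. Hermitian-ness would then be contradicted by comparison with a model case. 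There the same coefficients sit on genuinely free generators, and $\lambda-a$ fails to be invertible for some nonreal $\lambda$, for instance because $\lVert(\lambda-a)^{-1}\rVert$, computed via a Neumann series, must blow up. A fallback would be an inequality of Pták type, $r(b^*b)\le$ the largest $C^*$-seminorm, which I would prove directly from hermitian-ness for the specific element. The freeness then pushes $r_{\ell^1}$ above any $C^*$-bound.

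The main obstacle is the passage from finite-length independence to a spectral statement. Separation only guarantees free-like behaviour up to each finite length, and $G$ may be amenable (e.g.\ finitary permutations). So the argument cannot go through a $C^*$-norm gap alone. I would first try to make the spectral contradiction depend only on finitely many coefficients of a resolvent at a time: assume $(\lambda-a)^{-1}=b$ exists for a fixed nonreal $\lambda$. Truncating $b$ to a finite support, with error $\varepsilon$, reduces the identity $b(\lambda-a)=1$ to an identity among finitely many words. Choosing the later $g_j$ to separate from the support of the truncation should then yield $\lVert b\rVert$ at least as large as in the free model, and hence a contradiction for suitably chosen $\lambda$.
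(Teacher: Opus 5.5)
You use the same group-theoretic input as the paper (Neumann's lemma applied to the centralizer cosets $\{g:gxg^{-1}=y\}$). The analytic part, however, has a genuine gap, and one intermediate step cannot be carried out as stated.

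\textbf{The ``free up to length $n$'' scheme is unattainable.} Coincidences among words in the already chosen $g_1,\dots,g_{n-1}$, such as $g_1^k=e$, cannot be influenced by the choice of $g_n$. In a torsion ICC group such as the finitary symmetric group they must occur once $n$ is large. Coincidences between words containing the new letter more than once, such as $g_nug_n=v$, are not conjugacy conditions. Their solution sets are in general not cosets, so Neumann's lemma does not apply. In an ICC group of exponent $p$ (a Tarski monster, say) the coincidence $g_n^p=e$ is even unavoidable. Separation only controls words with a \emph{single} occurrence of the new letter, $uzv$ with $u,v$ in a fixed finite set, which is exactly the paper's notion of a separating element.

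\textbf{The spectral step is circular.} You fix $a$, suppose $b=(\lambda-a)^{-1}$ exists, and then want to choose the ``later'' $g_j$ to separate a truncation of $\supp b$. But $b$ depends on all of $a$, including those $g_j$. A comparison with a free model at a finite stage cannot rescue this. Each partial sum $\lambda-a_n$ may well be invertible, and $G$ need not contain a free subgroup (it may be amenable). Non-invertibility only appears in the limit, so you need a quantitative statement that survives the limit.

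\textbf{The missing idea is to make the construction adaptive and quantitative.}
\begin{itemize}
\item At stage $n$ you already know $R_n=(a_n-i)^{-1}$ and $C_n=\lVert R_n\rVert$. Choose $z_n$ separating a large finite piece of $\supp R_n$.
\item Put $a_{n+1}=a_n+\varepsilon_nb_n$ with $\varepsilon_n=1/(4C_n)$ and $b_n\in\{\re\delta_{z_n},\im\delta_{z_n}\}$.
\item Separation gives $\lVert R_n-\varepsilon_nR_nb_nR_n\rVert\ge C_n+\varepsilon_nC_n^2/2-C_n/48$. The factor $1/2$ comes from splitting $\delta_{z_n}$ into selfadjoint parts.
\item In the expansion $(a_{n+1}-i)^{-1}=R_n\sum_k(-\varepsilon_nb_nR_n)^k$, each term with $k\ge2$ is simply bounded by $C_n4^{-k}$. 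So repeated occurrences of a letter never need to be controlled.
\end{itemize}
This gives $C_{n+1}\ge\frac{49}{48}C_n$, hence $\sum_n\varepsilon_n<\infty$ and $a_n\to a=a^*$. The unit vectors $\xi_n=R_n/C_n$ then satisfy $\lVert(a-i)\xi_n\rVert\le1/C_n+\lVert a-a_n\rVert\to0$, so $a-i$ is not invertible. Without this, or a worked-out substitute, your proposal never reaches a spectral conclusion.
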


Both properties fail as soon as there is a selfadjoint $a\in\ell^1(G)$ such that $a-i$ is not invertible.
Then $i\in\spec(a)$ and $\ell^1(G)$ is not hermitian.
Moreover $a^*a+1=(a+i)(a-i)$ is not invertible.
Indeed a product of two commuting elements is invertible only if both factors are invertible.
So $-1\in\spec(a^*a)$ and $\ell^1(G)$ is not symmetric.
Hence it suffices to show that $\ell^1(G)$ contains such an element whenever $G$ is a nontrivial ICC group.
The proof of Theorem~\ref{thm:main} is short, elementary, and self-contained.
Its only group-theoretic ingredient is the classical lemma on coset covers \cite[Lemma~4.1]{Neumann}, whose proof we include; the remaining argument uses norm estimates and a geometric series.
In particular, we do not use the result of Samei and Wiersma that hermitian groups are amenable \cite{SameiWiersma}.

Together with permanence under quotients (see \cite[Theorem~12.5.18(a)]{Palmer}), Theorem~\ref{thm:main} shows that every hermitian group is FC-hypercentral.
The converse follows from a transfinite adaptation of Ludwig's argument in \cite{Ludwig}, giving the characterization in Corollary~\ref{cor:characterization}.
We recall that every virtually nilpotent group is FC-hypercentral, that every FC-hypercentral group has polynomial growth (meaning that all its finitely generated subgroups have polynomial growth), and that these three properties coincide for finitely generated groups.
In particular, it follows from our results that a finitely generated group is hermitian if and only if it is virtually nilpotent.

Let us explain the idea.
Let $a\in\ell^1(G)$ be selfadjoint and suppose that $a-i$ is invertible.
Put $R=(a-i)^{-1}$ and $C=\lVert R\rVert$.
In a $C^*$-algebra we would have $C\le1$ since the distance from $i$ to the real line is $1$.
In $\ell^1(G)$ there is no such bound.
We show that if $G$ is a nontrivial ICC group, then there is a selfadjoint $a'\in\ell^1(G)$ with $\lVert a'-a\rVert\le1/(4C)$ such that $a'-i$ is invertible and $\lVert(a'-i)^{-1}\rVert\ge\frac{49}{48}C$.
Repeating this we get selfadjoint elements $a_n$ such that the norms $C_n=\lVert(a_n-i)^{-1}\rVert$ grow geometrically.
The steps $a_{n+1}-a_n$ have norm at most $1/(4C_n)$.
So they are summable and the sequence $a_n$ converges to a selfadjoint element $a$.
Each $a_n-i$ is invertible but the inverses blow up.
This forces $a-i$ to be noninvertible.

How can a small perturbation increase $C$ by a fixed factor?
Let $b$ be selfadjoint with $\lVert b\rVert\le1$ and let $0<\varepsilon<1/C$.
Then $\lVert\varepsilon bR\rVert\le\varepsilon C<1$, so $1+\varepsilon bR$ is invertible and its inverse is the absolutely convergent geometric series $\sum_{k\ge0}(-\varepsilon bR)^k$.
Since $a+\varepsilon b-i=(1+\varepsilon bR)(a-i)$ we get
\[
(a+\varepsilon b-i)^{-1}=R(1+\varepsilon bR)^{-1}=R-\varepsilon RbR+\varepsilon^2RbRbR-\dotsb.
\]
Suppose for a moment that $RbR$ and $R$ have disjoint supports and that $\lVert RbR\rVert\ge C^2/2$.
Then the first two terms have norm at least $C+\varepsilon C^2/2$.
The other terms have norm at most $C\sum_{k\ge2}(\varepsilon C)^k$.
With $\varepsilon=1/(4C)$ the gain is $C/8$ and the loss is at most $C/12$.
So the norm grows by a fixed factor.

The main difficulty is that cancellation in the product $RbR$ can make its norm small.
The norm of a product in $\ell^1(G)$ can be much smaller than the product of the norms.
For example let $x\neq e$ with $x^2=e$ and put $f=\delta_e+i\delta_x$.
Then $f^2=2i\delta_x$ has norm $2$ while $\lVert f\rVert^2=4$.
The remedy is to put a new group element $z$ between the two factors.
In the example
\[
f\delta_zf=\delta_z+i\delta_{xz}+i\delta_{zx}-\delta_{xzx}
\]
has norm $4$ as soon as the four elements $z,xz,zx,xzx$ are different.
In general we want the products $uzv$ with $u,v\in\supp f$ to be pairwise different and to lie outside $\supp f$.
Pairwise distinctness prevents cancellation and gives $\lVert f\delta_zf\rVert=\lVert f\rVert^2$.
The second condition makes $f$ and $f\delta_zf$ have disjoint supports.
One may think of $z$ as a new free letter.
Such elements exist for every finite support precisely when $G$ is a nontrivial ICC group.
This is the only place where the ICC property is used.
To find $z$ we need a classical fact about covers of a group by cosets \cite[Lemma~4.1]{Neumann}.

The point mass $\delta_z$ need not be selfadjoint.
To rule out additional overlap for the natural selfadjoint choice $\frac12(\delta_z+\delta_{z^{-1}})$, one could impose $uzv\neq u'z^{-1}v'$ for all $u,v,u',v'\in\supp f$.
This is a stronger condition which we prefer to avoid.
Instead we write $\delta_z=\re\delta_z+i\im\delta_z$ with $\re\delta_z$ and $\im\delta_z$ selfadjoint.
By the triangle inequality one of $f(\re\delta_z)f$ and $f(\im\delta_z)f$ has norm at least $\lVert f\rVert^2/2$.
This is where the factor $1/2$ above comes from.

Finally $R$ usually has infinite support.
We approximate $R$ by finitely supported functions.
The error in this approximation does not depend on $b$.
This is enough to pass to all of $\ell^1(G)$.

Section~\ref{sec:cosets} contains the coset lemma and the choice of $z$.
Section~\ref{sec:separation} proves the separation estimate.
Section~\ref{sec:proof} contains the perturbation estimate and the proof of Theorem~\ref{thm:main}.
Section~\ref{sec:fch} gives the FC-hypercentral characterization.

\section{Coset covers and separating elements}\label{sec:cosets}

The following lemma is a classical fact about coset covers.
It is a weak form of \cite[Lemma~4.1]{Neumann}, which shows that one of the subgroups has index at most the number of cosets in the cover.
We include the short proof.

\begin{lemma}\label{lem:cosets}
Let $H_1,\dots,H_m$ be subgroups of a group $G$.
If $G$ is the union of finitely many left cosets of $H_1,\dots,H_m$, then some $H_j$ has finite index in $G$.
\end{lemma}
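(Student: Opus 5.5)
We argue by induction on the number $m$ of distinct subgroups occurring in the cover. Write the cover as $G=\bigcup_{j=1}^m\bigcup_{k=1}^{n_j} g_{jk}H_j$. We may discard any subgroup that contributes no cosets, and assume the $H_j$ are pairwise distinct as subgroups (cosets of equal subgroups are simply pooled). If $m=1$, then $G$ is a union of finitely many left cosets of $H_1$, so $H_1$ has finite index and we are done.

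For the inductive step, suppose $m\ge2$ and that $H_m$ has infinite index; otherwise we are finished. We will show that $G$ is covered by finitely many left cosets of $H_1,\dots,H_{m-1}$ alone, and then the induction hypothesis applies. Since $H_m$ has infinite index, only finitely many of its left cosets occur in the cover, so there is a left coset $xH_m$ not among $g_{m1}H_m,\dots,g_{mn_m}H_m$. Distinct left cosets of $H_m$ are disjoint, so $xH_m$ meets none of them. Hence
\[
xH_m\subseteq\bigcup_{j<m}\bigcup_{k} g_{jk}H_j .
\]
Each coset $g_{mk}H_m$ is a left translate of $xH_m$, namely $g_{mk}H_m=(g_{mk}x^{-1})\,xH_m$. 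Therefore
\[
g_{mk}H_m\subseteq\bigcup_{j<m}\bigcup_{l} g_{mk}x^{-1}g_{jl}H_j .
\]
Substituting this into the original cover replaces every coset of $H_m$ by finitely many left cosets of $H_1,\dots,H_{m-1}$. This gives a finite cover of $G$ using only those $m-1$ subgroups, and by induction one of $H_1,\dots,H_{m-1}$ has finite index.

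The only delicate point is making sure the finiteness of the cover survives the substitution. Each of the $n_m$ cosets of $H_m$ is replaced by at most $\sum_{j<m}n_j$ cosets, so the new cover is still finite. Left translation preserves the property of being a left coset of the same subgroup, since $g(hH)=(gh)H$, so the new sets really are left cosets of $H_1,\dots,H_{m-1}$. No further obstacle is expected; the stronger index bound of \cite[Lemma~4.1]{Neumann} is not needed here.
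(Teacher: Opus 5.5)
Your proof is correct. It has the same skeleton as the paper's: induction on the number of distinct subgroups, an uncovered coset $xH_m$, and then eliminating $H_m$ from the cover. Your elimination step, however, is simpler.

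After showing that $H_m$ is covered by finitely many sets $y(H_j\cap H_m)$ with $j<m$, the paper applies the induction hypothesis to the group $H_m$. This gives some $j$ with $[H_m:H_j\cap H_m]<\infty$. Only then does the paper cover each coset of $H_m$ in the cover by finitely many cosets of that single $H_j$. This is why it has to induct ``for all groups at the same time'' and uses the induction hypothesis twice.

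You observe instead that left translation by $g_{mk}x^{-1}$ carries the cover of $xH_m$ directly to a cover of $g_{mk}H_m$ by left cosets of $H_1,\dots,H_{m-1}$. So the detour through the intersections $H_j\cap H_m$ is unnecessary, and your induction stays inside the fixed group $G$ with a single application.

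What the paper's route gives in addition is that $H_m$ is covered by finitely many cosets of one $H_j$. That fact is used only to carry out the same replacement you obtain by translation, so your version is the more economical one.
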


\begin{proof}
We may assume that $H_1,\dots,H_m$ are distinct.
We use induction on $m$ for all groups at the same time.
If $m=1$ then $G$ is a finite union of cosets of $H_1$.
So $H_1$ has finite index.

Let $m\ge2$ and suppose that every $H_j$ has infinite index.
Then $H_m$ has infinitely many left cosets.
Choose a left coset $gH_m$ that does not occur in the cover.
It is disjoint from the cosets of $H_m$ in the cover.
So $gH_m$ is covered by the cosets of $H_1,\dots,H_{m-1}$ in the cover.
Multiplying by $g^{-1}$ and intersecting with $H_m$ we see that $H_m$ is a finite union of sets $xH_j\cap H_m$ with $j<m$.
If $y\in xH_j\cap H_m$ then $xH_j=yH_j$ and $y\in H_m$.
Hence $xH_j\cap H_m=y(H_j\cap H_m)$.
Thus $H_m$ is covered by finitely many left cosets of the subgroups $H_j\cap H_m$ with $j<m$.
By the induction hypothesis applied to the group $H_m$ some $H_j\cap H_m$ has finite index in $H_m$.
Then $H_m$ is contained in a finite union of left cosets of $H_j$.
The same holds for every left coset of $H_m$.
We replace each coset of $H_m$ in the cover by such a finite union.
This gives a cover of $G$ by finitely many left cosets of $H_1,\dots,H_{m-1}$.
By the induction hypothesis one of these subgroups has finite index in $G$.
This is a contradiction.
\end{proof}

For a finite set $K\subseteq G$ we write $K^2=\{uv:u,v\in K\}$ and $K^3=\{uvw:u,v,w\in K\}$.
We call $K$ symmetric if $K^{-1}=K$.

\begin{definition}\label{def:separating}
Let $K\subseteq G$ be a finite symmetric set with $e\in K$.
An element $z\in G$ \emph{separates} $K$ if
\begin{itemize}
\item[(i)] the elements $uzv$ with $u,v\in K$ are pairwise different, and
\item[(ii)] $uzv\notin K$ for all $u,v\in K$.
\end{itemize}
\end{definition}

Condition (i) says exactly that $z$ is a switching element for $K^2$ in the sense of \cite[Definition~2.3]{FHTV}.
Condition (ii) says that $z\notin K^3$.
Indeed $uzv\in K$ means that $z\in u^{-1}Kv^{-1}$, and the union of these sets over $u,v\in K$ is $K^{-1}KK^{-1}=K^3$.
Since $K^3$ is symmetric, (ii) also gives $uz^{-1}v\notin K$ for all $u,v\in K$.

\begin{lemma}\label{lem:separating}
Let $G$ be a nontrivial ICC group.
Then every finite symmetric set $K\subseteq G$ with $e\in K$ has a separating element.
\end{lemma}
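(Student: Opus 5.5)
The plan is to show that the set of elements $z$ violating (i) or (ii) is contained in finitely many left cosets of subgroups of infinite index. Lemma~\ref{lem:cosets} then says this finite union cannot be all of $G$, so some $z$ lies outside it, and that $z$ separates $K$. First note that an ICC group which is nontrivial is infinite: a nontrivial element has an infinite conjugacy class. Every finite set is therefore a finite union of cosets of the trivial subgroup, and the trivial subgroup has infinite index.

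Condition (ii) fails exactly when $z\in K^3$. This is a finite set, so it is a finite union of cosets of $\{e\}$.

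Condition (i) fails when $uzv=u'zv'$ for some pairs $(u,v)\neq(u',v')$ in $K\times K$. Rewrite this as $z^{-1}u'^{-1}uz=v'v^{-1}$. Put $s=u'^{-1}u$ and $t=v'v^{-1}$, both in $K^2$.
\begin{itemize}
\item If $s=e$, then $u=u'$, so $t=e$ and $v=v'$, contradicting $(u,v)\neq(u',v')$. Hence $s\neq e$.
\item The equation says $z^{-1}sz=t$. For fixed $s,t$, the set of such $z$ is either empty or a right coset $C_G(s)z_0$ of the centralizer $C_G(s)$.
\item Since $G$ is ICC and $s\neq e$, the conjugacy class of $s$ is infinite, so $C_G(s)$ has infinite index.
\end{itemize}

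There are finitely many pairs $(s,t)\in K^2\times K^2$. So the set of bad $z$ for (i) is a finite union of right cosets of infinite-index subgroups. Lemma~\ref{lem:cosets} is stated for left cosets, so I pass to inverses. The map $z\mapsto z^{-1}$ sends $C_G(s)z_0$ to $z_0^{-1}C_G(s)$, a left coset of the same subgroup. If $G$ were covered by all the bad sets, then $G=G^{-1}$ would be covered by finitely many left cosets of the subgroups $C_G(s)$ with $s\in K^2\setminus\{e\}$, together with the singletons coming from $K^3$. Lemma~\ref{lem:cosets} would then give a subgroup of finite index among these, which is a contradiction. Hence some $z$ avoids all bad sets and separates $K$.

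The only delicate step is the reduction of (i) to conjugacy conditions $z^{-1}sz=t$ with $s\neq e$. One has to check that distinct pairs force $s\neq e$, as done above. Everything else is bookkeeping, including the left/right coset conversion and the fact that the trivial subgroup has infinite index.
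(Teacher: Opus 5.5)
Your proof is correct and follows the paper's argument. You cover the elements $z$ violating (ii) by the singletons of $K^3$, and those violating (i) by cosets of centralizers $C_G(s)$ of nontrivial $s\in K^2$; Lemma~\ref{lem:cosets} then shows these finitely many cosets do not cover $G$. The only cosmetic difference is that you write the condition as $z^{-1}sz=t$, which gives right cosets and requires your inversion step, whereas the paper writes it as $zxz^{-1}=y$ with $x=vv'^{-1}$, $y=u^{-1}u'$, which yields left cosets $z_0C_G(x)$ directly.
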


\begin{proof}
Since $G$ is nontrivial and ICC it is infinite.
Put $X=K^2$.
For $x,y\in X\setminus\{e\}$ let
\[
S_{x,y}=\{z\in G:zxz^{-1}=y\}.
\]
Let $C_G(x)$ be the centralizer of $x$.
If $z_0\in S_{x,y}$ then $S_{x,y}=z_0C_G(x)$.
So every nonempty $S_{x,y}$ is a left coset of $C_G(x)$.
The index of $C_G(x)$ is the number of conjugates of $x$.
It is infinite since $x\neq e$.
Each point of $K^3$ is a left coset of the trivial subgroup.
This subgroup also has infinite index since $G$ is infinite.
Thus the nonempty sets $S_{x,y}$ and the points of $K^3$ are finitely many left cosets of subgroups of infinite index.
By Lemma~\ref{lem:cosets} they do not cover $G$.
Let $z$ lie outside all of them.
Then $z\notin K^3$ and so (ii) holds.

Suppose that $uzv=u'zv'$, where $u,v,u',v'\in K$.
Put $x=vv'^{-1}$ and $y=u^{-1}u'$.
Both lie in $X$ since $K$ is symmetric.
Moreover $zxz^{-1}=y$.
If $x\neq e$ then $y\neq e$ and $z\in S_{x,y}$.
This is not the case.
So $x=e$ and then also $y=e$.
This means that $u=u'$ and $v=v'$.
So (i) holds.
\end{proof}

\begin{remark}\label{rem:icc}
The conclusion of Lemma~\ref{lem:separating} holds only in nontrivial ICC groups.
Suppose that some $x\neq e$ has only finitely many conjugates.
Let $K$ be a finite symmetric set that contains $e$ and all conjugates of $x$.
For every $z\in G$ the element $y=zxz^{-1}$ lies in $K$ and $ezx=yze$.
The pairs $(e,x)$ and $(y,e)$ are different since $x\neq e$.
So no $z$ satisfies (i).
If $G$ is trivial then no $z$ satisfies (ii) for $K=\{e\}$.
Lemma~\ref{lem:cosets} is what allows us to handle finitely many conjugacy classes at the same time.
\end{remark}

\section{Separation}\label{sec:separation}

For $z\in G$ we use the real and imaginary parts of $\delta_z$,
\[
\re\delta_z=\frac{\delta_z+\delta_{z^{-1}}}{2}
\qquad\text{and}\qquad
\im\delta_z=\frac{\delta_z-\delta_{z^{-1}}}{2i}.
\]
Both are selfadjoint and have norm at most $1$.
Moreover $\delta_z=\re\delta_z+i\im\delta_z$.
If $z^2=e$ then $\re\delta_z=\delta_z$ and $\im\delta_z=0$.

The next lemma is the heart of the proof.
It says that a separating element moves a function away from itself without cancellation.
At least half of the norm survives the passage to a selfadjoint element.
This is enough.
The estimate holds for all of $\ell^1(G)$ up to an arbitrarily small error.

\begin{lemma}\label{lem:separation}
Let $G$ be a nontrivial ICC group.
Let $f\in\ell^1(G)$ and let $t\ge0$ and $\eta>0$.
Then there are $z\in G$ and $b\in\{\re\delta_z,\im\delta_z\}$ such that
\[
\lVert f-tfbf\rVert\ge\lVert f\rVert+\frac t2\lVert f\rVert^2-\eta.
\]
If $f$ has finite support then this also holds with $\eta=0$.
\end{lemma}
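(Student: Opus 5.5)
First treat the case where $f$ has finite support. Let $K=\supp f\cup(\supp f)^{-1}\cup\{e\}$. This set is finite, symmetric and contains $e$. By Lemma~\ref{lem:separating} it has a separating element $z$.

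Compute $f\delta_zf=\sum_{u,v}f(u)f(v)\delta_{uzv}$. By condition (i) the points $uzv$ are pairwise different, so there is no cancellation and $\lVert f\delta_zf\rVert=\lVert f\rVert^2$. By condition (ii), $\supp(f\delta_zf)\subseteq KzK$ is disjoint from $K\supseteq\supp f$. Since $K^3$ is symmetric, (ii) also gives $uz^{-1}v\notin K$, so $\supp(f\delta_{z^{-1}}f)$ is disjoint from $\supp f$ as well. Hence both $f(\re\delta_z)f$ and $f(\im\delta_z)f$ are supported outside $\supp f$.

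From $f\delta_zf=f(\re\delta_z)f+i\,f(\im\delta_z)f$ and the triangle inequality, one of these two has norm at least $\lVert f\rVert^2/2$. Call the corresponding element $b$. Disjointness of supports then gives
\[
\lVert f-tfbf\rVert=\lVert f\rVert+t\lVert fbf\rVert\ge\lVert f\rVert+\frac t2\lVert f\rVert^2 .
\]
This proves the case $\eta=0$.

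For general $f\in\ell^1(G)$, choose a finitely supported $f_0$, for instance a truncation of $f$, with $\lVert f-f_0\rVert\le\delta$. Apply the finite case to $f_0$ to obtain $z$ and $b$. The key point is that $\lVert b\rVert\le1$ holds whatever $z$ turns out to be, so the error estimate is uniform in $z$. Write
\[
fbf-f_0bf_0=(f-f_0)bf+f_0b(f-f_0),
\]
which gives $\lVert fbf-f_0bf_0\rVert\le\delta(\lVert f\rVert+\lVert f_0\rVert)\le\delta(2\lVert f\rVert+\delta)$. Therefore
\[
\lVert f-tfbf\rVert\ge\lVert f_0\rVert+\tfrac t2\lVert f_0\rVert^2-\delta-t\delta(2\lVert f\rVert+\delta).
\]
Moreover $\lVert f_0\rVert\ge\lVert f\rVert-\delta$, and $\lVert f_0\rVert^2\ge\lVert f\rVert^2-2\delta\lVert f\rVert$ (if $\lVert f_0\rVert\le\lVert f\rVert$; otherwise the bound is even better). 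So the total loss is $O(\delta(1+t\lVert f\rVert+t\delta))$. Choosing $\delta$ small in terms of $t$, $\lVert f\rVert$ and $\eta$ makes this at most $\eta$.

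The main obstacle is the no-cancellation and disjoint-support step in the finite case. It is settled by the separating element, including the symmetric consequence for $z^{-1}$ needed to handle $\re\delta_z$ and $\im\delta_z$. A further subtlety is that the approximation error must be independent of $z$, which is ensured by the bound $\lVert b\rVert\le1$.
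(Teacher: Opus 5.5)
Your proof is correct and follows the paper's argument step for step: a separating element for a finite symmetric $K\supseteq\supp f\cup\{e\}$ gives $\lVert f\delta_zf\rVert=\lVert f\rVert^2$ and supports disjoint from $\supp f$ (also for $z^{-1}$, by symmetry of $K^3$); the triangle inequality then selects $b$; and the general case is handled by finite approximation, with an error bound uniform in $b$ because $\lVert b\rVert\le1$. The only difference is bookkeeping: the paper truncates so that $\lVert f_0\rVert=\lVert f\rVert-r$ exactly and compares both sides additively, while you use a general $\delta$-approximation with slightly cruder estimates, which works equally well.
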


\begin{proof}
Suppose first that $f$ has finite support.
Let $K$ be a finite symmetric set that contains $e$ and $\supp f$.
By Lemma~\ref{lem:separating} there is $z\in G$ that separates $K$.
We have
\[
f\delta_zf=\sum_{u,v\in K}f(u)f(v)\delta_{uzv}.
\]
The elements $uzv$ in this sum are pairwise different by (i).
So no two terms cancel and
\[
\lVert f\delta_zf\rVert=\sum_{u,v\in K}|f(u)||f(v)|=\lVert f\rVert^2.
\]
Since $f\delta_zf=f(\re\delta_z)f+i\,f(\im\delta_z)f$ the triangle inequality gives
\[
\lVert f\rVert^2\le\lVert f(\re\delta_z)f\rVert+\lVert f(\im\delta_z)f\rVert.
\]
Choose $b\in\{\re\delta_z,\im\delta_z\}$ with $\lVert fbf\rVert\ge\lVert f\rVert^2/2$.
The function $fbf$ is supported in $KzK\cup Kz^{-1}K$.
This set is disjoint from $K$ by (ii).
So $f$ and $fbf$ have disjoint supports and
\[
\lVert f-tfbf\rVert=\lVert f\rVert+t\lVert fbf\rVert\ge\lVert f\rVert+\frac t2\lVert f\rVert^2.
\]

Now let $f$ be arbitrary.
Let $F\subseteq G$ be finite and let $f_0$ be the restriction of $f$ to $F$.
Put $r=\sum_{g\notin F}|f(g)|$.
Then $\lVert f-f_0\rVert=r$ and $\lVert f_0\rVert=\lVert f\rVert-r$.
Since $\sum_{g\in G}|f(g)|<\infty$ we can choose $F$ so large that
\[
r\bigl(1+2t\lVert f\rVert\bigr)\le\frac\eta2.
\]
Let $b$ be any element with $\lVert b\rVert\le1$.
Then
\[
(f-tfbf)-(f_0-tf_0bf_0)=(f-f_0)-t(f-f_0)bf-tf_0b(f-f_0).
\]
So the norms of $f-tfbf$ and $f_0-tf_0bf_0$ differ by at most
\[
r\bigl(1+t\lVert f\rVert+t\lVert f_0\rVert\bigr)\le r\bigl(1+2t\lVert f\rVert\bigr)\le\frac\eta2.
\]
This bound does not depend on $b$.
Moreover
\[
\lVert f\rVert+\frac t2\lVert f\rVert^2-\lVert f_0\rVert-\frac t2\lVert f_0\rVert^2
=r\Bigl(1+\frac t2\bigl(\lVert f\rVert+\lVert f_0\rVert\bigr)\Bigr)\le\frac\eta2.
\]
Let $z$ and $b$ be given by the first part applied to $f_0$.
Then
\[
\lVert f-tfbf\rVert\ge\lVert f_0-tf_0bf_0\rVert-\frac\eta2\ge\lVert f_0\rVert+\frac t2\lVert f_0\rVert^2-\frac\eta2\ge\lVert f\rVert+\frac t2\lVert f\rVert^2-\eta.
\qedhere
\]
\end{proof}

\section{Perturbation of the inverse and the proof}\label{sec:proof}

We now apply Lemma~\ref{lem:separation} to the inverse of $a-i$.
The terms of order two and higher are controlled by a geometric series.

\begin{lemma}\label{lem:amplification}
Let $G$ be a nontrivial ICC group.
Let $a\in\ell^1(G)$ be selfadjoint and suppose that $a-i$ is invertible.
Put $C=\lVert(a-i)^{-1}\rVert$ and $\varepsilon=1/(4C)$.
Then there are $z\in G$ and $b\in\{\re\delta_z,\im\delta_z\}$ such that $a+\varepsilon b-i$ is invertible and
\[
\lVert(a+\varepsilon b-i)^{-1}\rVert\ge\frac{49}{48}C.
\]
\end{lemma}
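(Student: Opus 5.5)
Put $R=(a-i)^{-1}$, so $\lVert R\rVert=C$. The plan is to apply Lemma~\ref{lem:separation} with $f=R$, $t=\varepsilon$ and a small $\eta>0$ (say $\eta=C/1000$), and to use the resulting $z$ and $b$. Since $b$ is selfadjoint with $\lVert b\rVert\le1$, the element $a+\varepsilon b$ is selfadjoint. Moreover $\lVert\varepsilon bR\rVert\le\varepsilon C=1/4<1$, so $1+\varepsilon bR$ is invertible with inverse given by the absolutely convergent Neumann series. From the factorization
\[
a+\varepsilon b-i=(1+\varepsilon bR)(a-i)
\]
we get that $a+\varepsilon b-i$ is invertible and
\[
(a+\varepsilon b-i)^{-1}=R-\varepsilon RbR+\sum_{k\ge2}(-\varepsilon)^kR(bR)^k .
\]

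Next we bound the norm from below. The first two terms are exactly $f-tfbf$ with $f=R$ and $t=\varepsilon$. Lemma~\ref{lem:separation} gives
\[
\lVert R-\varepsilon RbR\rVert\ge C+\frac{\varepsilon}{2}C^2-\eta=C+\frac C8-\eta .
\]
The tail has norm at most
\[
\sum_{k\ge2}\varepsilon^kC^{k+1}=C\sum_{k\ge2}4^{-k}=\frac{C}{12}.
\]
By the reverse triangle inequality,
\[
\lVert(a+\varepsilon b-i)^{-1}\rVert\ge C+\frac C8-\frac C{12}-\eta=\frac{25}{24}C-\eta .
\]
Choosing $\eta\le C/48$ yields the claimed bound $\frac{49}{48}C$, since $\frac{25}{24}-\frac1{48}=\frac{49}{48}$. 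Note that $C>0$ because $R\neq0$, so such an $\eta>0$ exists.

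There is no real obstacle left here, since the hard work (no cancellation, and disjoint supports up to the error $\eta$) is packaged in Lemma~\ref{lem:separation}. The only point needing care is that $z$ and $b$ are chosen depending on $\eta$, which is fixed in advance in terms of $C$ alone. Also, the tail estimate must use only $\lVert b\rVert\le1$, so that it is uniform in whichever $b$ the lemma produces.
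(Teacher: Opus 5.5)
Your proposal is correct and follows essentially the same route as the paper's proof. You use the factorization $a+\varepsilon b-i=(1+\varepsilon bR)(a-i)$, a Neumann-series tail bounded by $C/12$ uniformly in $b$, and Lemma~\ref{lem:separation} applied to $f=R$, $t=\varepsilon$, $\eta=C/48$, which gives $C+\tfrac C8-\tfrac C{48}-\tfrac C{12}=\tfrac{49}{48}C$.
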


\begin{proof}
Put $R=(a-i)^{-1}$ and let $b$ be any element with $\lVert b\rVert\le1$.
Since $R(a-i)=1$ we have
\[
a+\varepsilon b-i=(1+\varepsilon bR)(a-i).
\]
Here $\lVert\varepsilon bR\rVert\le\varepsilon C=1/4$.
So $1+\varepsilon bR$ is invertible and its inverse is the geometric series $\sum_{k\ge0}(-\varepsilon bR)^k$.
Hence $a+\varepsilon b-i$ is invertible and
\[
(a+\varepsilon b-i)^{-1}=R(1+\varepsilon bR)^{-1}=R-\varepsilon RbR+E,
\]
where $E=\sum_{k\ge2}R(-\varepsilon bR)^k$.
The $k$th term of $E$ has norm at most $C4^{-k}$.
So $\lVert E\rVert\le C/12$ and
\[
\lVert(a+\varepsilon b-i)^{-1}\rVert\ge\lVert R-\varepsilon RbR\rVert-\frac C{12}.
\]
This holds for every $b$ with $\lVert b\rVert\le1$.
Now apply Lemma~\ref{lem:separation} with $f=R$, $t=\varepsilon$ and $\eta=C/48$.
It gives $z$ and $b\in\{\re\delta_z,\im\delta_z\}$ with
\[
\lVert R-\varepsilon RbR\rVert\ge C+\frac{\varepsilon C^2}2-\frac C{48}=C+\frac C8-\frac C{48}.
\]
For this $b$ we get
\[
\lVert(a+\varepsilon b-i)^{-1}\rVert\ge C+\frac C8-\frac C{48}-\frac C{12}=\frac{49}{48}C.
\qedhere
\]
\end{proof}

\begin{remark}\label{rem:constants}
The constants are not important.
With $\varepsilon=s/C$ the gain is $sC/2$ and the loss is at most $Cs^2/(1-s)$.
Every $s$ with $0<s<1/3$ gives a factor larger than $1$ when $\eta$ is small.
\end{remark}

\begin{proof}[Proof of Theorem~\ref{thm:main}]
Let $G$ be a discrete ICC group and suppose that $G$ is nontrivial.
As explained after Theorem~\ref{thm:main} it suffices to find a selfadjoint $a\in\ell^1(G)$ such that $a-i$ is not invertible.

Put $a_0=0$.
Then $(a_0-i)^{-1}=i$ has norm $C_0=1$.
Suppose that $a_n$ is selfadjoint and that $a_n-i$ is invertible.
Put $C_n=\lVert(a_n-i)^{-1}\rVert$ and $\varepsilon_n=1/(4C_n)$.
Lemma~\ref{lem:amplification} gives $z_n\in G$ and $b_n\in\{\re\delta_{z_n},\im\delta_{z_n}\}$ such that
\[
a_{n+1}=a_n+\varepsilon_nb_n
\]
is selfadjoint, $a_{n+1}-i$ is invertible and $C_{n+1}\ge\frac{49}{48}C_n$.
By induction $C_n\ge(49/48)^n$ for all $n$.
Hence
\[
\sum_{n=0}^\infty\lVert a_{n+1}-a_n\rVert\le\sum_{n=0}^\infty\varepsilon_n\le\frac14\sum_{n=0}^\infty\Bigl(\frac{48}{49}\Bigr)^n<\infty.
\]
So the sequence $a_n$ converges to
\[
a=\sum_{n=0}^\infty\varepsilon_nb_n.
\]
The involution is isometric and each $\varepsilon_nb_n$ is selfadjoint.
So $a$ is selfadjoint.

It remains to show that $a-i$ is not invertible.
Put $R_n=(a_n-i)^{-1}$ and $\xi_n=R_n/C_n$.
Then $\lVert\xi_n\rVert=1$ and
\[
(a-i)\xi_n=\frac{(a_n-i)R_n+(a-a_n)R_n}{C_n}=\frac{1+(a-a_n)R_n}{C_n}.
\]
Hence
\[
\lVert(a-i)\xi_n\rVert\le\frac1{C_n}+\lVert a-a_n\rVert.
\]
Both terms tend to zero.
Thus $(\xi_n)$ is a sequence of approximate eigenvectors for left multiplication by $a$, at $i$.
If $a-i$ had an inverse $S$ then
\[
1=\lVert\xi_n\rVert=\lVert S(a-i)\xi_n\rVert\le\lVert S\rVert\lVert(a-i)\xi_n\rVert
\]
for all $n$.
This is impossible since the right hand side tends to zero.
Hence $a-i$ is not invertible.
\end{proof}

\section{FC-hypercentral groups}\label{sec:fch}

We conclude by relating Theorem~\ref{thm:main} to the upper FC-central series.
The converse implication is outlined below using the argument in \cite{Ludwig}; the sketch is intended to be read alongside that paper.
Together with permanence under quotients, this gives the characterization in Corollary~\ref{cor:characterization}.

The \emph{FC-center} $\mathrm{FC}(G)$ of a group $G$ consists of the elements with finite conjugacy class.
The upper FC-central series is defined by
\[
F_0=\{e\},\qquad F_{\alpha+1}/F_\alpha=\mathrm{FC}(G/F_\alpha),\qquad
F_\lambda=\bigcup_{\alpha<\lambda}F_\alpha
\]
for every limit ordinal $\lambda$.
This series eventually stabilizes at a normal subgroup $F_\infty$, called the \emph{FC-hypercenter} of $G$.
The group $G$ is \emph{FC-hypercentral} if $F_\infty=G$.

\begin{theorem}\label{thm:fch-hermitian}
	Every FC-hypercentral group is hermitian.
\end{theorem}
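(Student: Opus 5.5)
We will prove Theorem~\ref{thm:fch-hermitian} by transfinite induction along the upper FC-central series, following the extension scheme of Ludwig~\cite{Ludwig}. The main tool is a permanence result: if $N\trianglelefteq G$ with $N$ hermitian and with $G/N$ an FC-group, or more precisely with $G/N$ acting on $N$ with finite orbits in a suitable sense, then $G$ is hermitian. We set up the induction statement as ``$F_\alpha$ is hermitian'' for every ordinal $\alpha$. Here each $F_\alpha$ is viewed as a discrete group in its own right, and $\ell^1(F_\alpha)$ is a closed $*$-subalgebra of $\ell^1(G)$.

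\emph{Base case and limit step.} $F_0=\{e\}$ is trivially hermitian. For a limit ordinal $\lambda$ we have $F_\lambda=\bigcup_{\alpha<\lambda}F_\alpha$, an increasing union. So every selfadjoint $a\in\ell^1(F_\lambda)$ is a norm limit of selfadjoint elements $a_k$ with finite support, each lying in some $\ell^1(F_{\alpha_k})$. We want to pass hermitian-ness to the union. Spectrum is only upper semicontinuous in general, so we use instead the equivalent formulation that $\ell^1(H)$ is hermitian iff every finitely generated subgroup of $H$ is hermitian. This is a standard fact: hermitian-ness is determined by finitely supported selfadjoint elements via the spectral-radius characterization $r(a)=\lim\lVert a^n\rVert^{1/n}$ and the criterion $r(e^{ia})=1$, and the element $e^{ia}$ lives in the closed subalgebra generated by $a$. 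Using this criterion, each finitely generated subgroup of $F_\lambda$ lies in some $F_\alpha$, so the limit step is immediate.

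\emph{Successor step.} Assume $F_\alpha$ is hermitian, and let $H\le F_{\alpha+1}$ be finitely generated. Then $H\cap F_\alpha=N$ is normal in $H$, and $H/N$ embeds in $\mathrm{FC}(G/F_\alpha)$, so $H/N$ is a finitely generated FC-group. Hence $H/N$ is center-by-finite with finite derived subgroup, and in particular virtually abelian. Passing to a finite-index subgroup $H_1$ of $H$ with $H_1/N$ abelian is harmless, because hermitian-ness passes to finite extensions: $\ell^1(H)$ is a finite matrix-type extension of $\ell^1(H_1)$, and Palmer's theorem on finite extensions applies. The core case is therefore $N\trianglelefteq H_1$ with $N$ hermitian, $H_1/N$ finitely generated abelian, and the conjugation action of $H_1$ on $N$ such that each element of $N$ has finite $H_1$-orbit modulo $F_{\alpha'}$ for smaller $\alpha'$. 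Here we adapt Ludwig's argument. We write $\ell^1(H_1)$ as a twisted crossed product $\ell^1(N)\rtimes H_1/N$ and use the Leptin--Poguntke criterion: a twisted crossed product of a hermitian algebra by a group of polynomial growth is hermitian, provided the action has a polynomially bounded orbit growth. Here the action is by a finitely generated abelian group, and its polynomial growth is inherited through the induction by all finitely generated subgroups of $F_\alpha$.

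\emph{The obstacle.} The hardest step is the successor step, and specifically the verification that the action of $H_1/N$ on $\ell^1(N)$ satisfies the growth hypothesis Ludwig needs. In his finitely generated setting this comes from nilpotency. We would first try to strengthen the induction hypothesis to ``every finitely generated subgroup of $F_\alpha$ has polynomial growth and is hermitian.'' Polynomial growth propagates easily: a finitely generated FC-hypercentral group is virtually nilpotent, and growth is bounded by that of a nilpotent group. This strengthened hypothesis provides exactly the polynomial weight estimates required to run Ludwig's induction step verbatim.
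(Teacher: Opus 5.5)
\textbf{There is a genuine gap.} Your argument rests on a local-to-global principle that is false: a discrete group need not be hermitian when all its finitely generated subgroups are.

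Your justification has the very defect you flagged for spectra. The map $a\mapsto r(e^{ia})$ is only upper semicontinuous, so $r(e^{ia_k})\le1$ for finitely supported selfadjoint $a_k\to a$ does not give $r(e^{ia})\le1$.

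In fact the principle contradicts Theorem~\ref{thm:main}. The group of finitary permutations of $\mathbb{N}$ is ICC and locally finite. Its finitely generated subgroups are finite, hence hermitian, yet by Theorem~\ref{thm:main} the group is not hermitian. The locally nilpotent ICC groups in the paper's final remark show the same thing. The proof of Theorem~\ref{thm:main} exhibits the mechanism: $a$ is a norm limit of finitely supported selfadjoint $a_n$, while $i\in\spec(a)$. In the finitary permutation group each $a_n$ lies in $\ell^1$ of a finite subgroup and so has real spectrum.

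Consequently your limit step fails. So does the last move of your successor step, which again concludes from finitely generated subgroups. Note also that if locality held, no induction would be needed at all: finitely generated subgroups of an FC-hypercentral group are virtually nilpotent, hence hermitian by Ludwig. The whole content of the theorem is the passage to infinitely generated, transfinite unions, and that cannot be done subgroup by subgroup.

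The paper never asks whether the $F_\alpha$ are hermitian. It stays inside the fixed algebra $\ell^1(G)$. For a proper closed left ideal $I$ with $S_I\neq\{0\}$, Ludwig's reduction says it suffices to show $S_I^G\neq\{0\}$. One then proves $S_I^{F_\alpha}\neq\{0\}$ by transfinite induction:
\begin{itemize}
\item[(i)] The successor step is Ludwig's Lemma~1 applied to the s.i.\ subgroup $\mathrm{FC}(G/F_\alpha)$.
\item[(ii)] The limit step uses that the sets $A_\alpha$ of normalized $F_\alpha$-invariant forms are nonempty, weak$^*$-compact and decreasing. Their intersection therefore contains a nonzero form invariant under the union.
\end{itemize}
This compactness argument is what replaces your appeal to finitely generated subgroups.

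Independently of this, your successor step is not yet a proof. The ``Leptin--Poguntke criterion'' with a ``polynomially bounded orbit growth'' hypothesis is not a precise result you can cite. Also, finite orbits of elements of $N$ is not what FC-hypercentrality gives you. What it gives is finite conjugacy classes of elements of $F_{\alpha+1}$ modulo $F_\alpha$.
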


\begin{proof}[Proof sketch]
	We use the notation of \cite[pp.~188--190]{Ludwig}.
	For a proper closed left ideal $I$ of $\ell^1(G)$,
	let $S_I^H$ denote the bounded positive sesquilinear forms
	that vanish on $I\times\ell^1(G)$ and are invariant under
	simultaneous left translation by $H$.
	Assume that $S_I=S_I^{\{e\}}\neq\{0\}$.
	By \cite[p.~188, (1)]{Ludwig}, it suffices to show that
	$S_I^G\neq\{0\}$.
	
	Let $(F_\alpha)_{\alpha\le\gamma}$ be the upper FC-central
	series, with $F_\gamma=G$.
	We show by transfinite induction that
	$S_I^{F_\alpha}\neq\{0\}$.
	The initial step is immediate.
	The successor step follows from \cite[Lemma~1]{Ludwig},
	since $F_{\alpha+1}/F_\alpha=\mathrm{FC}(G/F_\alpha)$
	is an s.i.\ subgroup in the terminology of \cite{Ludwig}.
	
	Let $\beta$ be a limit ordinal and suppose the assertion
	holds for all $\alpha<\beta$.
	Consider
	\[
	A_\alpha=\{B\in S_I^{F_\alpha}:
	\lVert B\rVert\le1,\ B(1,1)\ge\tfrac12\}.
	\]
	These sets are nonempty.
	Indeed, for a positive form,
	\[
	\lVert B\rVert=\sup_{x\in G}B(\delta_x,\delta_x),
	\]
	so translating and rescaling a nonzero form gives the
	required normalization.
	Simultaneous left translation preserves the annihilation condition because
	$I$ is a left ideal, and preserves $F_\alpha$-invariance
	because $F_\alpha$ is normal.
	
	The sets $A_\alpha$ are weak$^*$-compact and decrease with
	$\alpha$, so their intersection is nonempty.
	Every form in the intersection is nonzero and invariant under
	$F_\beta=\bigcup_{\alpha<\beta}F_\alpha$.
	This proves the limit step.
	Transfinite induction now gives
	$S_I^G=S_I^{F_\gamma}\neq\{0\}$, as required.
\end{proof}

We can now combine the two implications.

\begin{corollary}\label{cor:characterization}
	A discrete group is hermitian if and only if it is FC-hypercentral.
\end{corollary}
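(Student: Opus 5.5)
The corollary has two directions, and I would prove them separately.

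The direction ``FC-hypercentral implies hermitian'' is exactly Theorem~\ref{thm:fch-hermitian}, so nothing more is needed there.

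For ``hermitian implies FC-hypercentral'', let $G$ be hermitian and let $F_\infty$ be its FC-hypercenter.
\begin{itemize}
\item[(1)] First I would check that $F_\infty$ is normal. Each $F_\alpha$ is characteristic, by transfinite induction: $\mathrm{FC}$ of a group is characteristic, preimages of characteristic subgroups under quotients by characteristic subgroups are characteristic, and unions of chains of characteristic subgroups are characteristic.
\item[(2)] Next, the quotient $Q=G/F_\infty$ is ICC. Since the series stabilizes, $F_{\infty+1}=F_\infty$, so $\mathrm{FC}(Q)$ is trivial. This says precisely that every nontrivial element of $Q$ has an infinite conjugacy class.
\item[(3)] By permanence of hermitian groups under quotients \cite[Theorem~12.5.18(a)]{Palmer}, $\ell^1(Q)$ is hermitian. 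The underlying reason is that the quotient map $\ell^1(G)\to\ell^1(Q)$ is a surjective $*$-homomorphism, and selfadjoint elements lift to selfadjoint elements, since one can average a lift $f$ with $f^*$. I would quote the cited result rather than reprove it, because spectral inclusion under surjections of Banach algebras needs some care.
\item[(4)] Theorem~\ref{thm:main} applied to $Q$ now gives that $Q$ is trivial. Hence $F_\infty=G$, so $G$ is FC-hypercentral.
\end{itemize}

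The only genuinely delicate point is step (3). If I could not cite it, I would argue as follows. A surjective $*$-homomorphism $\pi$ of unital Banach $*$-algebras satisfies $\spec(\pi(f))\subseteq\spec(f)$, because invertibility is preserved by $\pi$. Every selfadjoint $q\in\ell^1(Q)$ equals $\pi(f)$ for some selfadjoint $f\in\ell^1(G)$. Together these give $\spec(q)\subseteq\R$, which is the hermitian property for $Q$.

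The statement in the abstract also needs that ``no nontrivial ICC quotients'' is equivalent to FC-hypercentrality. I would note this in passing. If $N$ is normal and $G/N$ is ICC, then induction shows $F_\alpha\subseteq N$ for every $\alpha$: the image of $F_{\alpha+1}$ in $G/N$ consists of FC-elements, hence is trivial. So if $F_\infty=G$, then $N=G$.
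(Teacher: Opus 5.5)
Your proposal is correct and follows the paper's proof. The converse is Theorem~\ref{thm:fch-hermitian}, and for the forward direction $G/F_\infty$ is ICC, hermitian by permanence under quotients, and hence trivial by Theorem~\ref{thm:main}. Your fallback argument for step (3) is also sound: you lift selfadjoint elements and use $\spec(\pi(f))\subseteq\spec(f)$ for the unital surjective $*$-homomorphism $\pi$. In this unital discrete setting it needs no further care.
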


\begin{proof}
Suppose that $G$ is hermitian, and let $F_\infty$ be its FC-hypercenter.
Since the upper FC-central series has stabilized, $\mathrm{FC}(G/F_\infty)=\{e\}$, so $G/F_\infty$ is ICC.
Quotients of hermitian groups are hermitian \cite[Theorem~12.5.18(a)]{Palmer}.
Thus Theorem~\ref{thm:main} implies that $G/F_\infty$ is trivial, and hence $G$ is FC-hypercentral.
The converse is Theorem~\ref{thm:fch-hermitian}.
\end{proof}

\begin{remark}
The assertion in \cite[Theorem~12.5.17]{Palmer} that every locally nilpotent group is hermitian is incompatible with Theorem~\ref{thm:main}.
Indeed, there exist locally nilpotent ICC groups.
\end{remark}


\section*{AI disclosure and scope}
Around ten years ago, I worked on the problem of showing that non-trivial ICC groups cannot be hermitian.
The present result was obtained by feeding my notes and partial drafts from that period into AI models.
While many of the underlying ideas originated in those earlier drafts, AI assistance was essential for completing the proof.
For this reason, I have chosen to make this note available now in a concise preliminary form.
A later version will include a self-contained proof of Theorem~\ref{thm:fch-hermitian} and show how the results generalize to the twisted setting.
\vspace{2\baselineskip}

\end{document}